\documentclass[11pt]{article}
\usepackage[a4paper,margin=28mm]{geometry}
\usepackage[T1]{fontenc}
\usepackage[utf8]{inputenc}
\usepackage{lmodern}
\usepackage{microtype}
\usepackage{amsmath,amssymb,amsthm,mathtools}
\usepackage{enumitem}
\usepackage[round,authoryear]{natbib}
\usepackage[hidelinks]{hyperref}
\usepackage{url}

\newtheorem{theorem}{Theorem}
\newtheorem{lemma}[theorem]{Lemma}
\newtheorem{proposition}[theorem]{Proposition}
\newtheorem{corollary}[theorem]{Corollary}
\theoremstyle{remark}
\newtheorem{remark}[theorem]{Remark}

\DeclareMathOperator{\Hess}{Hess}
\DeclareMathOperator{\J}{J}
\newcommand{\Q}{\mathbb{Q}}
\newcommand{\C}{\mathbb{C}}
\newcommand{\id}{\operatorname{id}}

\title{Collision-Generated Compression for Homogeneous Keller Maps}
\author{Thomas Prellberg\\
\small School of Mathematical Sciences, Queen Mary University of London\\
\small Mile End Road, London E1 4NS, United Kingdom\\
\small \texttt{t.prellberg@qmul.ac.uk}}
\date{15 September 2026}

\begin{document}
\maketitle

\begin{abstract}
We associate to a collision $p\ne q$ of a homogeneous Keller map $F=\id+h$ the polarization subalgebra generated by $p$ and $q$. This collision hull is the smallest invariant linear subspace containing the collision; the restricted map remains a noninjective Keller map, and its dimension controls that of the standard symmetric lift. The construction is compatible with scalar extension and equivariant under linear conjugacy, so it gives a canonical linear carrier for a marked failure of injectivity. We compute this carrier exactly in two recent cubic-homogeneous reductions of the three-variable Jacobian counterexample. For Thompson's 24-variable map, the growth $2,4,11,20,20$ recovers MacFarlane's 20-dimensional invariant subspace. For the 19-variable homogenization of Van Rijn's 12-variable degree-three map, the growth $2,4,11,19,19$ fills the whole space. Hence no invariant linear restriction retaining the displayed collision can improve the corresponding 38-variable symmetric lift. We give the resulting 340-monomial homogeneous quartic explicitly: it is Hessian-nilpotent, violates Zhao's Vanishing Conjecture, and its gradient Keller map has an exact collision over $\Q(i)$. The analogous 20-variable application gives the previously studied 40-variable, 350-monomial quartic. All finite calculations are checked by exact companion code.
\end{abstract}

\noindent\textbf{2020 Mathematics Subject Classification.} Primary 14R15; Secondary 13N15.

\medskip
\noindent\textbf{Keywords.} Jacobian Conjecture; Vanishing Conjecture; Keller map; polarization algebra; invariant subspace; Hessian-nilpotent polynomial.

\section{Introduction and statement of the result}

For coordinates $z=(z_1,\ldots,z_n)$, write
\[
\Delta=\sum_{j=1}^n \frac{\partial^2}{\partial z_j^2}.
\]
Zhao proved that the all-dimensional Jacobian Conjecture is equivalent to the following quartic vanishing statement: if a homogeneous quartic $P\in\C[z_1,\ldots,z_n]$ satisfies
\[
\Delta^m P^m=0\qquad(m\ge 1),
\]
then $\Delta^mP^{m+1}=0$ for all sufficiently large $m$ \citep{Zhao2007}. He also identified the hypothesis with nilpotency of $\Hess P$ and gave an inversion formula involving the polynomials $\Delta^mP^{m+1}$. Consequently, a homogeneous Hessian-nilpotent quartic for which $\id-\nabla P$ is noninjective gives a counterexample to this precise form of the Vanishing Conjecture.

The underlying three-variable counterexample was announced by Alp\"oge, who credited Fable for the work leading to the example \citep{Alpoge2026}. Its geometry has already acquired several complementary descriptions. Speyer gave an explanation in terms of a tangent sweep \citep{Speyer2026}, Gao gave a self-contained account and further development of that mechanism \citep{Gao2026}, and van Dobben de Bruyn subsequently gave a coordinate-free interpretation through divisors in projective bundles whose complements are affine space \citep{vDdB2026}. These works clarify the geometry of the low-dimensional source. The question addressed here is different and reduction-theoretic: once a homogeneous Keller counterexample and a marked collision are given, what is the smallest linear carrier that must survive if that collision is to survive?

Thompson published an explicit 24-variable cubic-homogeneous Keller map with a rational collision \citep{Thompson2026}. MacFarlane then found four linear invariants and restricted it to an explicit 20-variable cubic-homogeneous Keller map retaining that collision \citep{MacFarlane2026}. The map displayed in Section~\ref{sec:twenty} below is exactly MacFarlane's map, after relabelling its coordinates.

The symmetric construction of de Bondt and van den Essen associates to a cubic-homogeneous map with nilpotent nonlinear Jacobian a homogeneous quartic in twice as many variables \citep{deBondtEssen2005a}. Santib\'a\~nez-Leal applied it to Thompson's 24-variable map, obtaining an explicit 48-variable quartic with 382 monomials and an exact gradient collision \citep{Santibanez2026}. Van Rijn recorded that applying the cotangent, equivalently symmetric, lift to MacFarlane's 20-variable map gives a 40-variable homogeneous quartic \citep{vanRijnStable2026}; a separate audit excludes constant-kernel quotients and invariant-hyperplane restrictions of that map \citep{vanRijnG20Audit2026}. Subsequently, a source--target coordinate-pair reduction produced a 12-variable degree-three Keller map whose cubic-output rank is six; rank-compressed homogenization gives a 19-variable cubic-homogeneous map and hence a 38-variable symmetric lift \citep{vanRijn12var2026}.

For a homogeneous map $h$, its full polarization equips the underlying vector space with a symmetric multiary algebra, usually called the polarization algebra of $h$ \citep{Umirbaev2019}. The stable space considered below is therefore the polarization subalgebra generated by the two collision points; we do not claim novelty for this generated-subalgebra construction itself. The contribution is that for a collision this subalgebra is a canonical carrier of noninjectivity: it supports a noninjective Keller restriction, is invariant under change of linear coordinates, and determines the dimension of the unchanged symmetric lift.

We apply this observation to two successive cubic-homogeneous reductions. In Thompson's original 24-dimensional space, the exact growth profile $2,4,11,20,20$ canonically recovers MacFarlane's invariant subspace. In Van Rijn's 19-dimensional map, the profile $2,4,11,19,19$ fills the entire space. The latter calculation shows that the 38-variable lift cannot be improved by any invariant linear restriction retaining its displayed collision, in any codimension. This is a route-specific obstruction rather than a claim of global minimality.

The construction has also entered an independent computational pipeline. The \texttt{kellermap} project is a Python library for certified transformations of polynomial Keller maps. Its current development line implements collision-generated compression as a certified \texttt{CompressionStep}, transports collisions through reduction chains, and implements the symmetric quartic lift as a \texttt{SymmetricLiftStep}; its documented 23-to-19-to-38 branch reproduces the same collision-hull dimension used here \citep{KellerMap2026}. The project also treats the stronger multi-affine normal form as a separate reduction branch rather than a prerequisite for the symmetric lift. This computational uptake is independent of the explicit 38- and 40-variable calculations below and illustrates the intended role of the collision hull as a composable reduction step.

Recent work also sharpens the low-dimensional boundary of related Hessian questions. Ni proves the quartic Hessian Conjecture in four variables \citep{Ni2026}, while Liu proves several positive results for the four-dimensional Hessian Conjecture in Lorentzian signature, including the degree-at-most-five case \citep{Liu2026}. These concern the constant-Hessian problem rather than the homogeneous Hessian-nilpotent quartics considered here. Likewise, the five-variable example of Meng and Yang has constant nonzero Hessian determinant and noninjective gradient \citep{MengYang2026}, and Dvorsky's five-variable counterexample to the Generalized Vanishing Conjecture uses a third-order constant-coefficient operator rather than the Laplacian \citep{Dvorsky2026}. Their variable counts are therefore not dimension comparisons with the quartic problem treated here.

Our main result combines the general compression theorem with two exact applications.

\begin{theorem}\label{thm:main}
The collision-generated polarization subalgebras of the two cubic-homogeneous Keller maps considered below have the following exact dimensions.
\begin{enumerate}[label=\arabic*.]
\item For Thompson's 24-variable map and his displayed rational collision, the stable subalgebra is exactly MacFarlane's 20-dimensional invariant subspace, with growth
\[
2,4,11,20,20.
\]
\item For the 19-variable cubic-homogeneous map obtained from Van Rijn's 12-variable degree-three Keller map, the displayed collision generates the whole 19-dimensional space, with growth
\[
2,4,11,19,19.
\]
\end{enumerate}
Consequently, the unchanged symmetric lifts give homogeneous quartics $P_{40}$ in 40 variables and $P_{38}$ in 38 variables, both over $\Q(i)$, with respectively 350 and 340 monomials. Each Hessian is nilpotent, and each polynomial satisfies
\[
\Delta^mP^m=0\quad(m\ge1),\qquad
\Delta^mP^{m+1}\ne0\quad\text{for infinitely many }m,
\]
and each gradient Keller map $\id-\nabla P$ has an explicit two-point collision over $\Q(i)$. Over each of $\Q$, $\Q(i)$, and $\C$, the lift dimensions 40 and 38 are minimal within the respective procedures that first restrict the given cubic-homogeneous map to an invariant linear subspace containing the displayed collision and then apply the symmetric lift without further alteration. No global minimality is asserted.
\end{theorem}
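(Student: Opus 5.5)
The proof splits into a general structural part and two finite exact computations; I would do the general part first.

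For $F=\id+h$ with $h$ cubic-homogeneous, let $H(x,y,z)$ be the full symmetric trilinear polarization, so $h(x)=H(x,x,x)$. Put $V_0=\operatorname{span}\{p,q\}$ and $V_{k+1}=V_k+H(V_k,V_k,V_k)$. The sequence stabilizes at some $W$, the polarization subalgebra generated by $p,q$.

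The general lemmas I would establish are:
\begin{enumerate}[label=(\roman*)]
\item A linear subspace $U$ is $h$-invariant, i.e.\ $h(U)\subseteq U$, if and only if $H(U,U,U)\subseteq U$. The nontrivial direction recovers $H$ from $h$ on $U$ by the polarization identity, using characteristic zero.
\item Consequently $W$ is the smallest invariant subspace containing $p$ and $q$. The construction commutes with scalar extension, since the $V_k$ are spans of the same rational vectors, and it is equivariant under $GL_n$.
\item $F|_W$ is a Keller map on $W$. For this, note that $J(h|_W)(x)=Jh(x)|_W$ is a restriction of a nilpotent operator to an invariant subspace, hence nilpotent. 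It is noninjective because $p,q\in W$.
\item The symmetric lift $P(x,y)=\langle y,h(x)\rangle$, suitably normalized with the factor $i$ so that $\Hess P$ is nilpotent, applied to $h|_W$ lives in $2\dim W$ variables. By the result of de~Bondt--van~den~Essen it is Hessian-nilpotent, and Zhao's equivalence turns the collision of $F|_W$ into a collision of $\id-\nabla P$ over $\Q(i)$.
\item Zhao's inversion formula then forces $\Delta^mP^{m+1}\ne0$ for infinitely many $m$. If these vanished eventually, the formal inverse would be polynomial, contradicting noninjectivity.
\end{enumerate}

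Minimality follows from (ii). Any invariant subspace $U$ containing the collision contains $W$, so its lift has at least $2\dim W$ variables. Over $\Q$, $\Q(i)$ and $\C$ the number $\dim W$ is the same by (ii).

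The two applications are then exact linear algebra over $\Q$. For Thompson's map I would compute the $V_k$ as ranks of spans of $H(u_a,u_b,u_c)$ over bases of $V_k$, obtaining dimensions $2,4,11,20,20$. Equality with MacFarlane's subspace follows once his 20-dimensional space $M$ is shown to be invariant and to contain $p,q$: then $W\subseteq M$, and equal dimensions give $W=M$. Identifying the restricted map with the displayed 20-variable map, up to relabelling, is a direct check. The same computation for the 19-variable homogenization gives $2,4,11,19,19$. The monomial counts 350 and 340 come from expanding $\langle y,h(x)\rangle$ in the displayed coordinates, and the collision pairs are written out explicitly and checked.

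The main obstacle is making this finite certification trustworthy. Stabilization needs $V_4=V_3$ shown exactly, not merely numerically. The monomial counts are coordinate-dependent, so the precise coordinates, normalization and any cancellations in $P$ must be fixed before counting. I would certify the computations with exact rational rank computations over $\Q$, which suffice because of (ii).
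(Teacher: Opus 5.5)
\textbf{Gap in step (iv): the lift is the wrong polynomial.} The polynomial you describe is not the de Bondt--van den Essen lift, and no constant normalization can fix it. For $P=c\langle y,h(x)\rangle$ with $c\neq0$ (e.g.\ $c=i$) one has $\Hess P=c\begin{pmatrix}S&(\J h)^T\\ \J h&0\end{pmatrix}$, where $S=\sum_j y_j\Hess h_j$. Hence
$\operatorname{tr}\bigl((\Hess P)^2\bigr)=c^2\bigl(\sum_{k,l}S_{kl}^2+2\sum_{j,k}(\partial_kh_j)^2\bigr)$, which is a nonzero constant times a nonzero sum of squares of rational polynomials. So $\Hess P$ is not nilpotent. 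By the same scaling argument you use for Keller maps, $\id-\nabla P$ is then not even Keller.

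The lift the paper uses involves a complex linear substitution: $P_W(x,y)=i\sum_j y_j\bar h_j(x+iy)$. This is $i\langle b,\bar h(a)\rangle$ written in the non-orthogonal coordinates $(a,b)=(x+iy,y)$. The trace computation above shows that $i\langle b,\bar h(a)\rangle$ in its own coordinates fails Hessian nilpotence, so the substitution is exactly what makes the lift work. This also breaks your counting step. Expanding $\langle y,h(x)\rangle$ gives one monomial per monomial of the $h_j$, i.e.\ $49$ for the 20-variable map and $47$ for $h^{(19)}$. The counts $350$ and $340$ come only from expanding $h_j(x+iy)$.

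\textbf{Missing mechanism for the collision.} Zhao's equivalence does not transport collisions; it is an equivalence between conjectures. What you need is the conjugation by $S(a,b)=(a-ib,b)$:
$S^{-1}\circ(\id-\nabla P_W)\circ S(a,b)=\bigl(a+\bar h(a),\ b+\J\bar h(a)^Tb-i\bar h(a)\bigr)$.
Its block-triangular Jacobian gives $\det(I-\Hess P_W)=1$, hence Hessian nilpotence after the scaling and Cayley--Hamilton argument. It also produces the explicit collision the statement asks for. The points $(p,0)$ and $(q,w)$ with $(I+\J\bar h(q)^T)w=i(p-q)$ have the same image. Applying $S$, the points $(p,0)$ and $(q+\rho,i\rho)$, with $\rho=(I+\J\bar h(q)^T)^{-1}(p-q)$, collide under $\id-\nabla P_W$.

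\textbf{The rest of your plan is sound.} Step (iii) is a valid alternative to the paper's block-triangular unit argument. In your notation $\J h(x)v=3H(x,x,v)$, so $W$ is $\J h(x)$-invariant for $x\in W$. Nilpotence of $\J h$ follows from homogeneity and $\det(I+\J h)=1$. Computing the hull directly in $\Q^{24}$ and concluding $W=V$ from $W\subseteq V$ and $\dim W=20$ is equivalent to the paper's pullback through $\iota$.
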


\section{Collision-generated compression}\label{sec:compression}

We first isolate the general mechanism used below. Recall that a polynomial map is Keller if its Jacobian determinant is a nonzero constant. For a homogeneous map $h$ with full polarization $T$, we use ``polarization subalgebra'' for a linear subspace closed under $T$, in the standard polarization-algebra terminology \citep{Umirbaev2019}.

\begin{lemma}[Restriction to an invariant linear subspace]\label{lem:restriction}
Let $k$ be a field, let $F\colon k^n\to k^n$ be a polynomial map with $\det\J F\in k^\times$, and let $W\subseteq k^n$ be a linear subspace whose defining ideal satisfies $F^*I(W)\subseteq I(W)$. Then $F|_W\colon W\to W$ is Keller.
\end{lemma}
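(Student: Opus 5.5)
We choose linear coordinates adapted to $W$ and read off the Jacobian in block form. Let $d=\dim W$, and pick a basis $e_1,\ldots,e_n$ of $k^n$ with $e_1,\ldots,e_d$ a basis of $W$. Write $x=(x_1,\ldots,x_d)$ and $y=(y_1,\ldots,y_{n-d})$ for the corresponding coordinates. Then $W=\{y=0\}$ and $I(W)=(y_1,\ldots,y_{n-d})$. The change of coordinates replaces $\J F$ by $A^{-1}(\J F)A$ for a constant invertible matrix $A$, so $\det \J F$ is unchanged and is still a nonzero constant. We may therefore work in these coordinates and write $F=(G(x,y),H(x,y))$, with $G$ the first $d$ components and $H$ the remaining $n-d$.

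The hypothesis $F^*I(W)\subseteq I(W)$ means that each $F^*y_j=H_j(x,y)$ lies in $(y_1,\ldots,y_{n-d})$, that is, $H_j(x,0)=0$ for all $j$. Two consequences follow:
\begin{itemize}
\item $F$ maps $W$ into $W$, and the restriction is the polynomial map $F|_W\colon x\mapsto G(x,0)$.
\item Since $H(x,0)\equiv0$ identically in $x$, we get $\partial H_j/\partial x_i(x,0)=0$ for all $i,j$.
\end{itemize}

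Evaluating $\J F$ at $y=0$ therefore gives the block upper-triangular form
\[
\J F(x,0)=\begin{pmatrix} \J_x G(x,0) & \J_y G(x,0)\\ 0 & \J_y H(x,0)\end{pmatrix}.
\]
Setting $c=\det\J F\in k^\times$, this yields
\[
c=\det \J_xG(x,0)\cdot\det \J_yH(x,0)
\]
in the polynomial ring $k[x]$. Here $\J_xG(x,0)$ is exactly $\J(F|_W)$, since differentiation in $x$ commutes with setting $y=0$.

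The only step needing a remark is passing from this product identity to constancy of each factor. A product of two polynomials in $k[x]$ equal to a nonzero constant forces both factors to be units of $k[x]$, because $k[x]$ is an integral domain and degrees add. The units of $k[x]$ are exactly $k^\times$. Hence $\det\J(F|_W)\in k^\times$, and $F|_W$ is Keller.

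As a byproduct, the quotient factor $\det\J_yH(x,0)$ is also a nonzero constant. No step is a genuine obstacle; the only care needed is the coordinate change at the start, which preserves the determinant.
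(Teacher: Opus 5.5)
Your proof is correct and is essentially the paper's argument: choose coordinates with $W=\{y=0\}$, observe that the Jacobian along $W$ is block upper-triangular with $\J(F|_W)$ as the upper-left block, and use that the only units of the polynomial ring are the nonzero constants. One small imprecision: the coordinate change actually gives $A^{-1}\,\J F(Az)\,A$ rather than $A^{-1}(\J F)A$, which is harmless because $\det\J F$ is constant.
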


\begin{proof}
Choose linear coordinates $(u,v)\in k^r\times k^{n-r}$ in which $W=\{v=0\}$, and write $F=(A,B)$. The assumed scheme-theoretic invariance gives $B(u,0)=0$, so
\[
\J F(u,0)=
\begin{pmatrix}
\J_uA(u,0)&\J_vA(u,0)\\
0&\J_vB(u,0)
\end{pmatrix}.
\]
The upper-left block is the Jacobian matrix of $F|_W$. Hence
\[
\det\J(F|_W)(u)\,\det\J_vB(u,0)=\det\J F\in k^\times.
\]
Both factors lie in $k[u_1,\ldots,u_r]$, whose only units are the nonzero constants. Thus $\det\J(F|_W)\in k^\times$.
\end{proof}

\begin{theorem}[Collision-hull compression and symmetric lift]\label{thm:collision-hull}
Let $k\subseteq\C$ be a field of characteristic zero, let $h\colon k^n\to k^n$ be homogeneous of degree $d\ge2$, and suppose that $F=\id+h$ is Keller. Let $p,q\in k^n$, with $p\ne q$, satisfy $F(p)=F(q)$. Let $T\colon(k^n)^d\to k^n$ be the symmetric $d$-linear polarization of $h$, so that $h(z)=T(z,\ldots,z)$, and define
\begin{equation}\label{eq:collision-hull}
W_0=\operatorname{span}_k\{p,q\},\qquad
W_{\nu+1}=W_\nu+\operatorname{span}_k\{T(w_1,\ldots,w_d):w_j\in W_\nu\}.
\end{equation}
Let $W$ be the stable value and put $r=\dim_kW$. Then:
\begin{enumerate}[label=\arabic*.]
\item $W$ is the smallest linear subspace containing $p,q$ and satisfying $h(W)\subseteq W$.
\item After choosing a basis of $W$ and retaining the notation $p,q$ for the corresponding coordinate vectors, the restriction is $g=\id+\bar h\colon k^r\to k^r$, where $\bar h$ is homogeneous of degree $d$, $\det\J g=1$, $\J\bar h$ is nilpotent, and the collision survives.
\item Over $K=k(i)$, put
\begin{equation}\label{eq:symmetric-lift}
P_W(x,y)=i\sum_{j=1}^r y_j\bar h_j(x+iy).
\end{equation}
Then $P_W$ is homogeneous of degree $d+1$, its Hessian is nilpotent, and $\id-\nabla P_W$ is a noninjective Keller map. More precisely, if
\[
\rho=(I+\J\bar h(q)^T)^{-1}(p-q),
\]
then $(p,0)$ and $(q+\rho,i\rho)$ have the same image.
\item If $d=3$, then $P_W$, viewed over $\C$, is a homogeneous counterexample to the quartic case of Zhao's Vanishing Conjecture:
\[
\Delta^mP_W^m=0\quad(m\ge1),\qquad
\Delta^mP_W^{m+1}\ne0\quad\text{for infinitely many }m.
\]
\end{enumerate}
The construction of $W$ commutes with extension of the ground field and is equivariant under linear conjugacy: if $A\in\operatorname{GL}_n(k)$, $F^A=A^{-1}FA$, and the marked collision is $(A^{-1}p,A^{-1}q)$, then its collision hull is $A^{-1}W$.
\end{theorem}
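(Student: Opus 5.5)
We prove the five assertions in the order stated. Each reduces either to Lemma~\ref{lem:restriction} or to standard facts about homogeneous Keller maps and the de Bondt--van den Essen symmetric lift.

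For part 1, the chain $W_0\subseteq W_1\subseteq\cdots$ is an increasing chain of subspaces of $k^n$, so it stabilizes at some $W$. Stability means $T(W,\ldots,W)\subseteq W$, and hence $h(w)=T(w,\ldots,w)\in W$ for every $w\in W$. For minimality, let $V$ contain $p,q$ with $h(V)\subseteq V$. Because $\operatorname{char}k=0$, the polarization formula writes $T(w_1,\ldots,w_d)$ as a $\Q$-linear combination of the values $h(\sum_{j\in S}w_j)$ over subsets $S$. Therefore $T(V,\ldots,V)\subseteq V$, and induction on $\nu$ gives $W_\nu\subseteq V$ for all $\nu$.

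The compatibility statements follow from the same recursion. Each $W_\nu$ is the span of finitely many explicit vectors computed from $p,q$ and the coefficients of $T$, so $W_\nu\otimes_kL$ is the $L$-span of those same vectors for any extension $L/k$. For equivariance, note that $h^A(z)=A^{-1}h(Az)$ has polarization $T^A(w_1,\ldots,w_d)=A^{-1}T(Aw_1,\ldots,Aw_d)$. Induction then gives $W^A_\nu=A^{-1}W_\nu$.

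For part 2, the condition $h(W)\subseteq W$ means that $F$ maps $W$ into $W$. Since $W$ is linear, $F^*I(W)\subseteq I(W)$ holds scheme-theoretically: each linear form vanishing on $W$ pulls back to a polynomial vanishing on $W$, hence lying in the radical ideal $I(W)$. Lemma~\ref{lem:restriction} then shows that $g=F|_W=\id+\bar h$ is Keller, and the collision survives because $p,q\in W$.

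To obtain $\det\J g=1$ and nilpotency of $\J\bar h$, we use homogeneity. Write $\det(I+\J\bar h(tz))=c$ and substitute $\J\bar h(tz)=t^{d-1}\J\bar h(z)$; setting $t=0$ forces $c=1$. Then $\det(I+s\J\bar h(z))=1$ for all $s$, since $s$ may be absorbed into $t$ because $d\ge2$. The characteristic polynomial of $\J\bar h(z)$ is therefore $\lambda^r$, which is the standard argument for nilpotency.

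For part 3, we compute $\nabla P_W$ directly. Put $u=x+iy$. Then
\[
\nabla_xP_W=i\,\J\bar h(u)^Ty,\qquad \nabla_yP_W=i\,\bar h(u)-\J\bar h(u)^Ty.
\]
Homogeneity of degree $d+1$ is immediate. The Hessian is the symmetric matrix of the de Bondt--van den Essen construction. Since $H$ is symmetric, $H^m$ can be expressed through powers of the nilpotent matrix $N=\J\bar h(u)$, and nilpotency of $H$ reduces to nilpotency of $N$. We would take this reduction from \citet{deBondtEssen2005a}, or verify it by writing $H=\begin{pmatrix}iB&-B\\-B&-iB\end{pmatrix}$-type blocks in terms of $M=N$ and $N^T$.

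The Keller property follows because $\det(I-H)=1$ when $H$ is nilpotent. For the collision we verify directly that $G=\id-\nabla P_W$ takes equal values at the two points. At $(p,0)$ we get $G(p,0)=(p,-i\bar h(p))$. At $(q+\rho,i\rho)$ we have $u=q+\rho-\rho=q$. The first coordinate is $q+\rho+N_q^T\rho=q+(I+N_q^T)\rho=p$ by the definition of $\rho$. The second coordinate is $i\rho-i\bar h(q)+iN_q^T\rho=i(p-q)-i\bar h(q)$. This equals $-i\bar h(p)$ precisely because $p+\bar h(p)=q+\bar h(q)$. The matrix $I+N_q^T$ is invertible because $N_q$ is nilpotent.

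For part 4, take $d=3$, so that $P_W$ is a homogeneous quartic with nilpotent Hessian. Zhao's theorem \citep{Zhao2007} then gives $\Delta^mP_W^m=0$ for all $m\ge1$. If $\Delta^mP_W^{m+1}$ vanished for all large $m$, Zhao's inversion formula would express the inverse of $\id-\nabla P_W$ polynomially. That would make $\id-\nabla P_W$ injective, contradicting part 3.

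The main obstacle is the nilpotency of the Hessian of the lift. The other steps are formal or quoted, but this one requires the block computation. We would carry it out by rewriting the gradient in terms of $u=x+iy$ and $v=y$: the Hessian becomes conjugate to a block-triangular matrix whose diagonal blocks involve only $N$ and $N^T$, and these are nilpotent.
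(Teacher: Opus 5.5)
Your outline is the paper's proof: the polarization identity for minimality, Lemma~\ref{lem:restriction} for the restriction, homogeneity plus Cayley--Hamilton for $\J\bar h$, the substitution $u=x+iy$, $v=y$ for the lift, and Zhao's inversion formula for part~4. Your direct collision check from $\nabla_xP_W=i\J\bar h(u)^Ty$ and $\nabla_yP_W=i\bar h(u)-\J\bar h(u)^Ty$ is correct. It is the paper's check through the triangular conjugate $L\circ\mathcal F\circ S$, written in the original coordinates.

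The Hessian paragraph, however, is wrong as written. Symmetry plays no role, and $\Hess P_W$ is not a function of $N=\J\bar h(u)$ alone. With $\Sigma=\sum_j y_j\Hess\bar h_j(u)$ one has
\[
\Hess P_W=\begin{pmatrix} i\Sigma & iN^T-\Sigma\\ iN-\Sigma & -N-N^T-i\Sigma\end{pmatrix},
\]
so $H^m$ cannot be expressed through powers of $N$, and the suggested block shape is not the actual Hessian. Your last paragraph has the right idea, but the change of variables must be a similarity. That is, apply it to both source and target of $\id-\nabla P_W$, as in the paper's $L\circ\mathcal F\circ S$ with $L=S^{-1}$. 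Do not take the Hessian of $P_W\circ S$: that is only congruent to $\Hess P_W$, and congruence does not preserve nilpotency. With $S=\begin{pmatrix} I&-iI\\ 0&I\end{pmatrix}$ one finds
\[
S^{-1}\,\Hess P_W\,S=\begin{pmatrix}-N&0\\ iN-\Sigma&-N^T\end{pmatrix},
\]
which is nilpotent because $N^r=0$. This endgame is slightly more direct than the paper's. The paper uses the triangular form only to get $\det(I-\Hess P_W)=1$, then reruns the homogeneity and Cayley--Hamilton argument to recover nilpotency.

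Two smaller omissions. You must state that $(p,0)\ne(q+\rho,i\rho)$: equality forces $\rho=0$ and then $p=q$, and without this noninjectivity is not established. Also, passing from $F(W)\subseteq W$ to $F^*I(W)\subseteq I(W)$ uses that $k$ is infinite, which holds here since $k\subseteq\C$.
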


\begin{proof}
The sequence in \eqref{eq:collision-hull} stabilizes because its dimensions are nondecreasing and bounded by $n$. At the stable value, $T(W,\ldots,W)\subseteq W$, hence $h(W)\subseteq W$. Conversely, if $U$ contains $p,q$ and satisfies $h(U)\subseteq U$, the polarization identity
\[
T(v_1,\ldots,v_d)=\frac1{d!}\sum_{S\subseteq\{1,\ldots,d\}}(-1)^{d-|S|}
 h\left(\sum_{j\in S}v_j\right)
\]
shows that $T(U,\ldots,U)\subseteq U$. Induction gives $W_\nu\subseteq U$ for every $\nu$, proving minimality.

Since $k\subseteq\C$ is infinite, the pointwise inclusion $h(W)\subseteq W$ implies scheme-theoretic invariance: for every linear form $\ell$ vanishing on $W$, the polynomial $\ell\circ F$ vanishes on every $k$-point of $W$, hence belongs to $I(W)$. Thus $F^*I(W)\subseteq I(W)$, and Lemma~\ref{lem:restriction} shows that $g=F|_W$ is Keller. Since $\bar h$ has degree at least two, $\J g(0)=I$, so $\det\J g=1$. The original collision lies in $W$ and survives.

Put $A(z)=\J\bar h(z)$. Homogeneity gives
\[
1=\det\J g(tz)=\det(I+t^{d-1}A(z)).
\]
The homomorphism $k[z_1,\ldots,z_r,s]\to k[z_1,\ldots,z_r,t]$ defined by $s\mapsto t^{d-1}$ and fixing the $z_j$ is injective, so $\det(I+sA(z))=1$. Thus the characteristic polynomial of $A$ is $X^r$, and Cayley--Hamilton in $\operatorname{Mat}_r(k[z])$ gives $A^r=0$.

For the symmetric lift \citep{deBondtEssen2005a}, put
\[
f(x,y)=-i\sum_{j=1}^r y_j\bar h_j(x+iy),\qquad P_W=-f,
\]
and let $\mathcal F=\id+\nabla f=\id-\nabla P_W$. For
\[
S(a,b)=(a-ib,b),\qquad L(c,e)=(c+ie,e),
\]
direct differentiation gives
\begin{equation}\label{eq:triangular-conjugate}
L\circ\mathcal F\circ S(a,b)=
\bigl(a+\bar h(a),\ b+\J\bar h(a)^Tb-i\bar h(a)\bigr).
\end{equation}
The Jacobian matrix on the right is block triangular. Since $\det(I+\J\bar h(a))=1$, it has determinant
\[
\det(I+\J\bar h(a))\det(I+\J\bar h(a)^T)=1.
\]
Both linear maps have determinant one, so $\det(I+\Hess f)=1$. Since $\Hess f$ is homogeneous of degree $d-1$, scaling the variables and using the injective homomorphism
\[
K[x,y,s]\longrightarrow K[x,y,t],\qquad s\longmapsto t^{d-1},
\]
which fixes $x,y$, gives $\det(I+s\Hess f)=1$. Cayley--Hamilton now proves that $\Hess f$, and hence $\Hess P_W$, is nilpotent.

Since $g(p)=g(q)$, one has $\bar h(p)-\bar h(q)=q-p$. The matrix $I+\J\bar h(q)^T$ is invertible, and for $w=i\rho$,
\[
(I+\J\bar h(q)^T)w=i(p-q)=-i(\bar h(p)-\bar h(q)).
\]
It follows from \eqref{eq:triangular-conjugate} that the right-hand side takes the same value at $(p,0)$ and $(q,w)$. Since $L$ is invertible, this implies $\mathcal F(S(p,0))=\mathcal F(S(q,w))$. Finally, $S(p,0)=(p,0)$ and $S(q,w)=(q+\rho,i\rho)$, proving the collision. The two points are distinct: equality would imply $i\rho=0$, hence $\rho=0$, and then equality of the first coordinates would give $p=q$.

When $d=3$, Zhao's Hessian-nilpotence criterion \citep[Theorem~4.3]{Zhao2007} gives $\Delta^mP_W^m=0$ for all $m\ge1$. His inversion formula \citep[Theorem~3.4]{Zhao2007} gives the potential of the formal inverse of $z-t\nabla P_W$ as
\begin{equation}\label{eq:zhao-inverse}
Q_t=\sum_{m=0}^{\infty}\frac{t^m}{2^m m!(m+1)!}\,\Delta^mP_W^{m+1}.
\end{equation}
Eventual vanishing would make $Q_t$ polynomial in $t$ and the variables. Zhao's formula identifies the formal inverse with $z+t\nabla Q_t$. The formal composition identities would then be polynomial identities, so evaluating them at $t=1$ would give a polynomial inverse of $\id-\nabla P_W$, contrary to the collision. Thus the sequence is not eventually zero, or equivalently $\Delta^mP_W^{m+1}\ne0$ for infinitely many $m$.

Finally, multilinearity and induction on $\nu$ show that for every field extension $E/k$, the corresponding spaces satisfy $W_{\nu,E}=E\otimes_kW_\nu$. For linear conjugacy, if $h^A=A^{-1}hA$, then its full polarization is
\[
T^A(v_1,\ldots,v_d)=A^{-1}T(Av_1,\ldots,Av_d).
\]
Starting from $W^A_0=A^{-1}W_0$, induction in \eqref{eq:collision-hull} gives $W^A_\nu=A^{-1}W_\nu$ for every $\nu$, and hence $W^A=A^{-1}W$.
\end{proof}

\begin{remark}\label{rem:carrier}
The stable space $W$ is the polarization subalgebra generated by $p,q$ in the sense of \citet{Umirbaev2019}. The restriction lemma and symmetric lift are elementary or standard. The additional point of Theorem~\ref{thm:collision-hull} is that, when $p,q$ form a collision, this generated subalgebra supports a noninjective Keller restriction and therefore governs the dimension of the associated symmetric lift. Its equivariance under linear conjugacy makes its dimension, and the corresponding route-specific lower bound for the lift, independent of the chosen linear coordinates.
\end{remark}

\section{The twenty-variable cubic map}\label{sec:twenty}

Write Thompson's cubic-homogeneous map as $u+H(u)$ in 24 variables \citep{Thompson2026}. MacFarlane observed the termwise identities
\begin{equation}\label{eq:four-relations}
H_7=3H_5,\qquad H_8=H_6,\qquad H_{12}=H_{10},\qquad H_{17}=-2H_{15},
\end{equation}
and that both points of Thompson's collision lie in the invariant subspace
\begin{equation}\label{eq:V}
V=\{u_7=3u_5,\ u_8=u_6,\ u_{12}=u_{10},\ u_{17}=-2u_{15}\}\simeq\Q^{20}.
\end{equation}
This gives MacFarlane's 20-variable cubic-homogeneous restriction \citep{MacFarlane2026}.

Use the coordinates
\begin{equation}\label{eq:zcoords}
\begin{split}
(z_1,\ldots,z_{20})={}&(u_1,u_2,u_3,u_4,u_5,u_6,u_9,u_{10},u_{11},u_{13},\\
&u_{14},u_{15},u_{16},u_{18},u_{19},u_{20},u_{21},u_{22},u_{23},u_{24})
\end{split}
\end{equation}
on $V$. The inverse coordinate projection $\iota\colon\Q^{20}\to V\subseteq\Q^{24}$ is explicitly
\begin{equation}\label{eq:iota}
\begin{split}
\iota(z)=(&z_1,z_2,z_3,z_4,z_5,z_6,3z_5,z_6,z_7,z_8,z_9,z_8,\\
&z_{10},z_{11},z_{12},z_{13},-2z_{12},z_{14},z_{15},z_{16},z_{17},z_{18},z_{19},z_{20}).
\end{split}
\end{equation}
After the relabelling, MacFarlane's map is $g(z)=z+h(z)$, where
\begin{align*}
h_1={}&-z_{11}z_{12}z_{20}-z_{17}z_{20}^2-\tfrac32z_{19}z_{20}^2,\\
h_2={}&3z_1z_3z_{20}+3z_{14}z_{20}^2-3z_5z_6z_{20}-z_8z_9z_{20},\\
h_3={}&-z_8z_{10}z_{20}+z_{15}z_{20}^2+3z_{18}z_{20}^2+4z_2^2z_{20}-z_4z_5z_{20}-z_6z_7z_{20},\\
h_4={}&2z_{12}z_{13}z_{20}-z_{16}z_{20}^2,\\
h_5={}&z_{18}z_{20}^2+3z_2^2z_{20},\\
h_6={}&z_{19}z_{20}^2,\\
h_7={}&3z_2z_3z_{20}-z_2z_5z_{20},\\
h_8={}&z_1z_2z_{20},\\
h_9={}&6z_1z_3z_{20}-3z_1z_5z_{20}-3z_3z_6z_{20},\\
h_{10}={}&-z_1z_7z_{20}+7z_2^2z_{20}-z_3z_4z_{20},\\
h_{11}={}&z_1z_3z_{20},\\
h_{12}={}&-\tfrac12z_1^2z_{20},\\
h_{13}={}&z_2^2z_{20},\\
h_{14}={}&-4z_1z_2^2+\tfrac13z_1z_2z_9+2z_1z_3z_8-z_1z_5z_8+3z_2^2z_6-z_3z_6z_8,\\
h_{15}={}&z_1z_2z_{10}-z_1z_7z_8+3z_2^2z_4+7z_2^2z_8+3z_2z_3z_6-z_2z_5z_6-z_3z_4z_8,\\
h_{16}={}&-z_1^2z_{13}+2z_2^2z_{12},\\
h_{17}={}&\tfrac12z_1^2z_{11}-z_1z_3z_{12},\\
h_{18}={}&-z_1z_2z_3,\\
h_{19}={}&-z_1^2z_2,\\
h_{20}={}&0.
\end{align*}
Direct substitution in Thompson's 24 displayed components gives the ambient intertwining identity
\begin{equation}\label{eq:intertwine}
H\circ\iota=\iota\circ h,\qquad
(\id+H)\circ\iota=\iota\circ(\id+h).
\end{equation}
Define
\begin{equation}\label{eq:p}
p=(0,0,-\tfrac14,0,0,0,0,0,0,0,0,0,0,0,0,0,0,0,0,1),
\end{equation}
and
\begin{equation}\label{eq:q}
\begin{split}
q=(&1,-\tfrac32,\tfrac{13}{2},-\tfrac94,3,\tfrac32,\tfrac{99}{4},\tfrac32,-\tfrac34,-\tfrac{45}{8},-\tfrac{13}{2},\tfrac12,-\tfrac94,-\tfrac{15}{8},\\
&\tfrac{567}{16},-\tfrac92,\tfrac{13}{2},-\tfrac{39}{4},-\tfrac32,1).
\end{split}
\end{equation}
The corresponding points $\widetilde p=\iota(p)$ and $\widetilde q=\iota(q)$ are the two points of Thompson's collision lying in $V$.

\begin{proposition}\label{prop:twenty-map}
The map $g=\id+h\colon\C^{20}\to\C^{20}$ satisfies
\[
g(p)=g(q)=p,\qquad (\J h)^{17}\ne0,\qquad(\J h)^{18}=0.
\]
Moreover, for $\widetilde g=\id+H\colon\C^{24}\to\C^{24}$,
\[
\widetilde g(\widetilde p)=\widetilde g(\widetilde q)=\widetilde p.
\]
In particular, both maps are noninjective Keller maps.
\end{proposition}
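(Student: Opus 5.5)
The argument is a finite exact verification organised so that as much structure as possible does the work; I would also mirror it in the companion code.

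\emph{Collision.} First evaluate $h(p)$. Every monomial of $h_1,\dots,h_{13}$ carries a factor $z_{20}$, and at $p$ only $z_3=-\tfrac14$ and $z_{20}=1$ are nonzero. Scanning the list shows that no monomial is built solely from $z_3$ and $z_{20}$. For example, $h_2$ contains $z_1z_3z_{20}$, which vanishes because $z_1=0$. The components $h_{14},\dots,h_{19}$ each involve $z_1$ or $z_2$. Hence $h(p)=0$ and $g(p)=p$.

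For $g(q)=p$ one must check $h(q)=p-q$. This is a direct rational substitution of the twenty coordinates of $q$ into the twenty cubics. I would do it componentwise, ordering the components by their triangular structure: $h_{20}=0$, then $h_{19},h_{18},h_{12},\dots$ depend only on early variables. This keeps the arithmetic checkable by hand, and the script confirms it.

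\emph{Nilpotency index.} $\det\J g$ is constant, in fact $1$. The Keller property then follows exactly as in the proof of Theorem~\ref{thm:collision-hull}: nilpotency of $\J h$ gives $\det(I+\J h)=1$.

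To prove $(\J h)^{18}=0$, I would look for a weight or ordering in which $\J h$ is strictly triangular after permutation. The natural attempt is to order the variables by the dependency graph ``$h_j$ involves $z_k$''. If this graph is acyclic, the longest chain length bounds the nilpotency index. I expect the graph to have cycles, for instance $z_2\to z_{14}$ through $h_2$ and $z_{14}\to z_2$ through $h_{14}$. In that case a pure combinatorial argument fails, and the fallback is exact symbolic computation: form the $20\times 20$ polynomial matrix $\J h$ over $\Q[z]$ and compute its successive powers exactly. One checks that the 18th power is identically zero. For $(\J h)^{17}\ne0$, it is enough to exhibit one nonzero entry, or cheaply a rational point $z_0$ with $\J h(z_0)^{17}\ne0$.

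I expect this power computation to be the main obstacle, because of expression swell in degree-$34$ entries. To control it, I would first evaluate at many random rational points. The identity $(\J h)^{18}=0$ is polynomial of degree $36$ in each entry, so a Schwartz--Zippel argument with enough points, or interpolation along lines, upgrades pointwise vanishing to an exact identity. Alternatively, multiplying sparse polynomial matrices directly is feasible.

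\emph{Ambient statement.} The 24-variable claim follows from \eqref{eq:intertwine} without recomputing Thompson's map on generic points. The identity $(\id+H)\circ\iota=\iota\circ(\id+h)$ gives
\[
\widetilde g(\widetilde p)=\iota(g(p))=\iota(p)=\widetilde p,
\]
and likewise $\widetilde g(\widetilde q)=\iota(g(q))=\iota(p)=\widetilde p$. It remains to justify \eqref{eq:intertwine}, by substituting $\iota(z)$ into the 24 components of $H$. The content is precisely MacFarlane's relations \eqref{eq:four-relations}, which are termwise identities of the displayed polynomials.

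\emph{Conclusions.} $\widetilde g$ is Keller by Thompson's result, or by Lemma~\ref{lem:restriction} run in reverse, with ambient nilpotency checked in the same way. Since $\iota$ is injective and $p\ne q$, we have $\widetilde p\ne\widetilde q$. So both maps are noninjective Keller maps, as claimed.
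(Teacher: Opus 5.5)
Your proposal is essentially the paper's proof: direct substitution for $g(p)=g(q)=p$, exact multiplication of the polynomial matrix $\J h$ (the paper finds six nonzero terms in the seventeenth power and none in the eighteenth), the intertwining identity \eqref{eq:intertwine} for the ambient collision, and nilpotency together with Thompson's construction for the Keller property.

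A few side remarks need fixing. Random Schwartz--Zippel evaluation certifies $(\J h)^{18}=0$ only probabilistically, so the exact sparse multiplication must carry that step. Lemma~\ref{lem:restriction} does not run in reverse, since a Keller restriction to $V$ does not force $\det\J\widetilde g$ to be constant. Finally, $h_{14}$ and $h_{15}$ contain the monomials $z_3z_6z_8$ and $z_3z_4z_8$, which vanish at $p$ because $z_4=z_6=z_8=0$ there, not because of $z_1$ or $z_2$.
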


\begin{proof}
Direct substitution gives the displayed 20-dimensional collision, and \eqref{eq:intertwine} gives the ambient collision. Exact multiplication of the polynomial matrix $\J h$ leaves six nonzero terms in its seventeenth power and no terms in its eighteenth power. Nilpotence gives $\det(I+\J h)=1$; Thompson's map is Keller by construction \citep{Thompson2026}.
\end{proof}

\section{Thompson's collision hull and the 40-variable lift}\label{sec:forty}

Let $T\colon(\Q^{20})^3\to\Q^{20}$ be the symmetric trilinear polarization of $h$, and form the collision hull
\[
W_0=\operatorname{span}_{\Q}\{p,q\},\qquad
W_{\nu+1}=W_\nu+\operatorname{span}_{\Q}\{T(a,b,c):a,b,c\in W_\nu\}.
\]
Exact rational row reduction gives
\begin{equation}\label{eq:growth20}
\dim W_0,\ldots,\dim W_4=2,4,11,20,20.
\end{equation}

\begin{proposition}[Canonical recovery of the 20-dimensional restriction]\label{prop:canonical20}
The whole space $\Q^{20}$ is the smallest linear subspace containing $p,q$ and invariant under $h$. In Thompson's original 24-dimensional space, the collision-generated invariant hull of $\widetilde p,\widetilde q$ under $H$ is exactly the subspace $V$ in \eqref{eq:V}. Both statements remain true after extension to $\Q(i)$ or $\C$.
\end{proposition}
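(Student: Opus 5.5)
The first statement is read off from the growth computation \eqref{eq:growth20}. Since $\dim W_3=20$, the stable value is $W=\Q^{20}$. Part~1 of Theorem~\ref{thm:collision-hull} then says $W$ is the smallest linear subspace containing $p,q$ with $h(W)\subseteq W$. The real content is therefore the exact rational row reduction behind \eqref{eq:growth20}. I would carry it out as follows. Compute the symmetric trilinear form $T$ from the cubic components $h_j$ by polarization. At each stage $\nu$, form all $T(a,b,c)$ with $a,b,c$ ranging over a basis of $W_\nu$ (multisets suffice, by symmetry). Then row reduce over $\Q$ to get the ranks $2,4,11,20$. For the final step it is enough to exhibit $20$ independent vectors among the generated ones, e.g.\ a nonzero $20\times20$ minor. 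Certifying that rank is the main computational burden, but it is finite exact linear algebra.

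For the ambient statement I would use the intertwining identity \eqref{eq:intertwine}. First, its polarized form holds: the polarization of $H\circ\iota$ equals $H$'s polarization $\widetilde T$ composed with $\iota$ in each slot, because $\iota$ is linear. Hence
\[
\widetilde T(\iota a,\iota b,\iota c)=\iota\,T(a,b,c).
\]
Induction on $\nu$ with $\widetilde W_0=\operatorname{span}\{\widetilde p,\widetilde q\}=\iota W_0$ then gives $\widetilde W_\nu=\iota W_\nu$ for every $\nu$. This is the same argument as the conjugacy-equivariance clause of Theorem~\ref{thm:collision-hull}, with the injective linear map $\iota$ in place of $A$. The ambient growth is therefore also $2,4,11,20,20$, and the hull is $\iota(\Q^{20})$. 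By \eqref{eq:iota}, $\iota(\Q^{20})$ lies in $V$. Both spaces have dimension $20$, so the hull equals $V$.

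Alternatively, one can argue by minimality. $V$ is $H$-invariant, since \eqref{eq:intertwine} gives $H(\iota z)=\iota h(z)\in V$, and $V$ contains $\widetilde p,\widetilde q$. So Part~1 gives $\widetilde W\subseteq V$, and $\widetilde W\supseteq\iota W=V$.

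Finally, the extension to $\Q(i)$ or $\C$ follows from the base-change clause of Theorem~\ref{thm:collision-hull}, $W_{\nu,E}=E\otimes_\Q W_\nu$. The dimensions are unchanged, and $V_E$ is cut out by the same four rational equations. Alternatively, rank over $\Q$ equals rank over any extension field, since the generating vectors have rational entries.
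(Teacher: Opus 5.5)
Your proof is correct. The first assertion is handled exactly as in the paper: the growth \eqref{eq:growth20} combined with Part~1 of Theorem~\ref{thm:collision-hull}. The base-change step also matches. The ambient assertion is where your main route differs.

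\textbf{The paper's route.} It uses a two-sided minimality sandwich.
\begin{enumerate}[label=(\roman*)]
\item $\widetilde W\subseteq V$, because $V$ is $H$-invariant and contains $\widetilde p,\widetilde q$.
\item The pullback $\iota^{-1}(\widetilde W)$ contains $p,q$. It is $h$-invariant, since $\iota z\in\widetilde W$ gives $\iota h(z)=H(\iota z)\in\widetilde W$.
\item By the first assertion, $\iota^{-1}(\widetilde W)=\Q^{20}$, which forces $\widetilde W\supseteq\iota(\Q^{20})=V$.
\end{enumerate}
This needs only the pointwise identity $H\circ\iota=\iota\circ h$ and the two minimality statements. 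No polarization of the intertwining identity is required.

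\textbf{Your route.} You polarize the intertwining identity and transport the whole filtration, giving $\widetilde W_\nu=\iota W_\nu$. This mirrors the conjugacy-equivariance clause. It costs one routine extra step: apply $\iota$ to the polarization identity, or invoke uniqueness of the symmetric trilinear form with a given diagonal. In return it gives more information. The ambient growth profile is also $2,4,11,20,20$, and the identification with $V$ reduces to a dimension count.

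\textbf{Your ``alternative'' argument.} This is essentially the paper's route, but as written the inclusion $\widetilde W\supseteq\iota W$ is stated without justification. Standing alone, it needs exactly the pullback step (ii)--(iii) above, or else your induction.
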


\begin{proof}
Equation~\eqref{eq:growth20} and Theorem~\ref{thm:collision-hull} give the first assertion over $\Q$. Let $\widetilde W\subseteq\Q^{24}$ be the collision hull generated by $\widetilde p,\widetilde q$ for Thompson's nonlinear part $H$. Since $V$ is $H$-invariant and contains both points, minimality gives $\widetilde W\subseteq V$. Its pullback $\iota^{-1}(\widetilde W)$ contains $p,q$. It is $h$-invariant by \eqref{eq:intertwine}: if $z\in\iota^{-1}(\widetilde W)$, then $\iota(h(z))=H(\iota(z))\in\widetilde W$.

The first assertion therefore forces $\iota^{-1}(\widetilde W)=\Q^{20}$. Hence $\widetilde W=V$. The base-change assertion in Theorem~\ref{thm:collision-hull} gives the statements over the larger fields.
\end{proof}

\begin{corollary}[Route-specific minimality]\label{cor:min40}
Let $k\in\{\Q,\Q(i),\C\}$, and base-change Thompson's map, the subspace $V$, and the collision to $k$. Forty is minimal among examples obtained by restricting the resulting 24-variable map to a $k$-linear invariant subspace containing $\widetilde p,\widetilde q$ and then applying \eqref{eq:symmetric-lift} without further alteration.
\end{corollary}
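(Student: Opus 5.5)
The plan is to reduce the corollary to Proposition~\ref{prop:canonical20} and the minimality clause of Theorem~\ref{thm:collision-hull}. First I will settle what the ``procedure'' produces. If $U\subseteq k^{24}$ is a $k$-linear subspace containing $\widetilde p,\widetilde q$ with $H(U)\subseteq U$, then choosing a basis of $U$ gives a cubic-homogeneous map $\id+\bar h$ on $k^{\dim U}$. By Theorem~\ref{thm:collision-hull}(2)--(3) this restriction is Keller with $\J\bar h$ nilpotent, and the collision survives. Applying \eqref{eq:symmetric-lift} without alteration yields a quartic in exactly $2\dim U$ variables. So the number of variables produced by the route is $2\dim U$, and minimizing it means minimizing $\dim U$.

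Next I will apply part~1 of Theorem~\ref{thm:collision-hull} over $k$. The collision hull $\widetilde W_k$ of $\widetilde p,\widetilde q$ under $H$ is the smallest $H$-invariant $k$-subspace containing both points. Hence every admissible $U$ satisfies $\widetilde W_k\subseteq U$, and so $\dim_k U\ge\dim_k\widetilde W_k$. Proposition~\ref{prop:canonical20} identifies $\widetilde W_k$ with $V\otimes k$ for each $k\in\{\Q,\Q(i),\C\}$; this uses the base-change compatibility in Theorem~\ref{thm:collision-hull}. Since $V\cong\Q^{20}$, we get $\dim_k\widetilde W_k=20$. Every admissible route therefore produces at least $40$ variables.

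For attainment, I take $U=V\otimes k$. In the coordinates \eqref{eq:zcoords} the restriction is MacFarlane's map $g$ by \eqref{eq:intertwine}. Its symmetric lift is the 40-variable quartic $P_{40}$, which by Theorem~\ref{thm:collision-hull}(3) is a genuine example with a collision. Thus 40 is attained, and it is the minimum.

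The only point needing care is that the invariance notion in the procedure must match the one used in the hull. Restriction requires the scheme-theoretic condition of Lemma~\ref{lem:restriction}, whereas the hull is minimal among subspaces with pointwise $H(U)\subseteq U$. Since $k$ is infinite, the argument in the proof of Theorem~\ref{thm:collision-hull} shows the two notions coincide, so the lower bound applies to every admissible $U$. I also need that the lift dimension depends only on $\dim U$ and not on the chosen basis; this is immediate because \eqref{eq:symmetric-lift} always doubles the number of coordinates. I expect no real obstacle beyond recording these identifications, since the substantive computation is the growth profile \eqref{eq:growth20}, which was already used in Proposition~\ref{prop:canonical20}.
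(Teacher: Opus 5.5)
Your proposal is correct and is the paper's argument: every admissible invariant subspace contains the collision hull, which Proposition~\ref{prop:canonical20} identifies with $V\otimes k$ of dimension twenty, and the unchanged symmetric lift doubles the dimension. Your extra remarks (attainment at $U=V\otimes k$ giving $P_{40}$, and the agreement of pointwise and scheme-theoretic invariance over an infinite field) make explicit what the paper's two-line proof leaves implicit. One small point: parts 2--3 of Theorem~\ref{thm:collision-hull} are stated for the hull itself, which is harmless, since you use them only at $U=V\otimes k$ and the lower bound needs only the variable count $2\dim U$.
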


\begin{proof}
Every such invariant subspace contains the collision hull $V$, so it has dimension at least twenty. The symmetric construction uses two copies of the cubic space.
\end{proof}

We now apply the lift with $r=20$:
\begin{equation}\label{eq:P40}
\boxed{P_{40}(x,y)=i\sum_{j=1}^{20}y_jh_j(x+iy).}
\end{equation}
Collection in the monomial basis gives 350 nonzero terms.

For completeness, the collision from Theorem~\ref{thm:collision-hull} is displayed explicitly. Put
\begin{equation}\label{eq:rho40-def}
\rho_{40}=(I+\J h(q)^T)^{-1}(p-q)\in\Q^{20}.
\end{equation}
The inverse exists because $\J h(q)$ is nilpotent. In coordinates,
\[
\begin{split}
\rho_{40}=(&-\tfrac{582383}{512},\ \tfrac{138141}{256},\ \tfrac{604023}{64},\ -\tfrac{26433}{64},\ \tfrac{17457}{64},\\
&-\tfrac{53763}{64},\ -\tfrac{2007}{16},\ \tfrac{25263}{128},\ \tfrac{27}{16},\ -\tfrac{1521}{32},\\
&\tfrac{39}{4},\ -63,\ \tfrac{27}{4},\ -\tfrac{413943}{256},\ -\tfrac{606291}{64},\\
&-\tfrac{26145}{64},\ -\tfrac{585711}{512},\ -\tfrac{914451}{32},\ -\tfrac{885405}{1024},\ \tfrac{169983}{64}).
\end{split}
\]
Thus the distinct points
\begin{equation}\label{eq:collision40}
A_{40}=(p,0),\qquad B_{40}=(q+\rho_{40},i\rho_{40})
\end{equation}
have the same image under $\id-\nabla P_{40}$. By Theorem~\ref{thm:collision-hull}, $P_{40}$ is Hessian-nilpotent and violates Zhao's quartic Vanishing Conjecture. As a direct low-order check, $\Delta(P_{40}^2)\ne0$, with 8,630 monomials.

\section{A nineteen-variable cubic map and its 38-variable lift}\label{sec:thirtyeight}

We now apply the same construction to the smaller cubic-homogeneous map obtained subsequently in \citet{vanRijn12var2026}. Write $a=(a_1,\ldots,a_{12})$. Van Rijn's degree-three map is
\[
K(a)=a+Q(a)+C(a),
\]
where the quadratic part is
\begin{align*}
Q_1&=-a_{11}a_{12}, & Q_2&=3a_1a_3-3a_5a_6-a_8a_9,\\
Q_3&=-a_{10}a_8+4a_2^2-a_4a_5-a_6a_7, & Q_4&=-a_8^2,\\
Q_5&=3a_2^2, & Q_6&=0,\\
Q_7&=3a_2a_3-a_2a_5, & Q_8&=a_1a_2,\\
Q_9&=6a_1a_3-3a_1a_5-3a_3a_6, & Q_{10}&=-a_1a_7+7a_2^2-a_3a_4,\\
Q_{11}&=a_1a_3, & Q_{12}&=-\tfrac12a_1^2,
\end{align*}
and the cubic part has $C_7=\cdots=C_{12}=0$ and
\begin{align*}
C_1={}&\tfrac12a_1^2a_{11}-\tfrac32a_1^2a_2-a_1a_{12}a_3,\\
C_2={}&12a_1a_2^2-a_1a_2a_9-6a_1a_3a_8+3a_1a_5a_8-9a_2^2a_6+3a_3a_6a_8,\\
C_3={}&-a_1a_{10}a_2+3a_1a_2a_3+a_1a_7a_8-3a_2^2a_4-7a_2^2a_8-3a_2a_3a_6+a_2a_5a_6+a_3a_4a_8,\\
C_4={}&-2a_1a_2a_8,\qquad C_5=a_1a_2a_3,\qquad C_6=a_1^2a_2.
\end{align*}
The exact map in \citet{vanRijn12var2026} is Keller and has the rational collision
\begin{equation}\label{eq:alpha}
\alpha=(0,0,-\tfrac14,0,0,0,0,0,0,0,0,0),
\end{equation}
\begin{equation}\label{eq:beta}
\beta=(1,-\tfrac32,\tfrac{13}{2},-\tfrac94,3,\tfrac32,\tfrac{99}{4},\tfrac32,-\tfrac34,-\tfrac{45}{8},-\tfrac{13}{2},\tfrac12),
\end{equation}
with $K(\alpha)=K(\beta)=\alpha$. Moreover the span of the cubic outputs has dimension six. In the displayed coordinates the first six components are already independent, so if
\[
C^{(6)}=(C_1,\ldots,C_6),\qquad B(w_1,\ldots,w_6)=(w_1,\ldots,w_6,0,\ldots,0)\in\Q^{12},
\]
the rank-compressed homogenization is the cubic-homogeneous Keller map
\begin{equation}\label{eq:G19}
G_{19}(a,w,\tau)=\bigl(a+\tau Q(a)+\tau^2B(w),\ w-C^{(6)}(a),\ \tau\bigr)\colon\Q^{19}\longrightarrow\Q^{19}.
\end{equation}
Its nonlinear part will be denoted by $h^{(19)}$. Since
\[
C^{(6)}(\alpha)=0,\qquad
C^{(6)}(\beta)=\left(-\tfrac{17}{4},-\tfrac{45}{8},\tfrac{99}{16},\tfrac92,-\tfrac{39}{4},-\tfrac32\right),
\]
the points
\begin{equation}\label{eq:rs}
r=(\alpha,0,1),\qquad s=(\beta,C^{(6)}(\beta),1)
\end{equation}
satisfy $G_{19}(r)=G_{19}(s)=r$.

Let $T_{19}$ be the symmetric trilinear polarization of $h^{(19)}$, and start the collision-generated sequence with $U_0=\operatorname{span}_{\Q}\{r,s\}$. Exact rational row reduction gives
\begin{equation}\label{eq:growth19}
\dim U_0,\ldots,\dim U_4=2,4,11,19,19.
\end{equation}

\begin{proposition}[No invariant-linear compression of the 19-variable collision]\label{prop:no19compression}
The whole space $\Q^{19}$ is the smallest linear subspace containing $r,s$ and invariant under $h^{(19)}$. The same is true after extension to $\Q(i)$ or $\C$. Consequently, 38 is minimal among examples obtained by restricting $G_{19}$ to an invariant linear subspace containing this collision and then applying the symmetric lift without further alteration.
\end{proposition}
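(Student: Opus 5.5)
The plan mirrors the proof of Proposition~\ref{prop:canonical20}, with the exact computation \eqref{eq:growth19} as the only nonformal input. First, we record that $G_{19}$ is of the form $\id+h^{(19)}$ with $h^{(19)}$ cubic-homogeneous and that $G_{19}$ is Keller (as stated above, from \citet{vanRijn12var2026}). We also record that $r\ne s$ and $G_{19}(r)=G_{19}(s)=r$. The collision-hull hypotheses of Theorem~\ref{thm:collision-hull} then hold with $k=\Q$, $d=3$, $n=19$.

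Second, we identify the stable value of the sequence $U_\nu$ using \eqref{eq:growth19}. Since $\dim U_3=\dim U_4=19$, the sequence has stabilized at $U=\Q^{19}$. Part~1 of Theorem~\ref{thm:collision-hull} then says that $U$ is the smallest linear subspace containing $r,s$ and invariant under $h^{(19)}$, which is the first assertion. For the extension to $\Q(i)$ and $\C$ we invoke the base-change clause of Theorem~\ref{thm:collision-hull}: $U_{\nu,E}=E\otimes_\Q U_\nu$, so the hull over $E$ is $E^{19}$.

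Third, the minimality consequence follows as in Corollary~\ref{cor:min40}. Any $k$-linear subspace $W'$ containing $r,s$ and invariant under $G_{19}$ is also invariant under $h^{(19)}=G_{19}-\id$, since $W'$ is a linear subspace. Minimality then gives $W'\supseteq U_k=k^{19}$. Hence the only admissible restriction is the identity restriction, with $r=19$, and the symmetric lift \eqref{eq:symmetric-lift} uses $2r=38$ variables. One should check that "invariant under $G_{19}$" and "invariant under $h^{(19)}$" agree for linear subspaces. This is immediate in both directions.

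The main obstacle is certifying \eqref{eq:growth19} itself, specifically that $U_3$ has full rank 19. I would do this by exact rational row reduction. The first step is to compute $T_{19}$ by polarizing each component of $h^{(19)}$, namely $\tau Q(a)+\tau^2 B(w)$ in the $a$-slots, $-C^{(6)}(a)$ in the $w$-slots, and $0$ in the $\tau$-slot. Next, evaluate $T_{19}$ on all triples drawn from a basis of $U_\nu$; by multilinearity and symmetry, basis triples suffice. Finally, reduce to echelon form. For the last step it is enough to exhibit 19 explicitly chosen vectors $T_{19}(b_1,b_2,b_3)$ with $b_j$ in a basis of $U_2$, together with $r,s$, whose $19\times19$ determinant is a nonzero rational. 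This gives a compact certificate for the full-rank claim, and the intermediate dimensions $4$ and $11$ are not even needed for the proposition.
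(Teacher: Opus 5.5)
Your proposal is correct and follows the paper's own argument: the exact growth computation \eqref{eq:growth19}, combined with part~1 and the base-change clause of Theorem~\ref{thm:collision-hull}, gives the hull, and the minimality of 38 follows because any admissible invariant subspace must contain the full hull. One small slip in your certificate is that a $19\times19$ determinant needs 19 vectors of $U_3$ in total (for instance $r,s$ and 17 values of $T_{19}$ on basis triples of $U_2$), not 19 values of $T_{19}$ together with $r,s$.
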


\begin{proof}
Equation~\eqref{eq:growth19} and Theorem~\ref{thm:collision-hull} show that the collision hull is all of $\Q^{19}$. The base-change and minimality statements are then immediate from that theorem.
\end{proof}

For $X=(X_1,\ldots,X_{19})$ and $Y=(Y_1,\ldots,Y_{19})$, define
\begin{equation}\label{eq:P38}
\boxed{P_{38}(X,Y)=i\sum_{j=1}^{19}Y_jh^{(19)}_j(X+iY).}
\end{equation}
Exact collection gives 340 nonzero monomials. To record a fully explicit collision, put
\[
\rho_{38}=(I+\J h^{(19)}(s)^T)^{-1}(r-s).
\]
The exact vector is
\[
\begin{split}
\rho_{38}=(&-\tfrac{415631}{512},\ \tfrac{48861}{256},\ \tfrac{297735}{64},\ -\tfrac{2565}{64},\ \tfrac{7701}{64},\ \tfrac{4809}{64},\\
&-\tfrac{603}{16},\ \tfrac{1995}{128},\ \tfrac{147}{16},\ -\tfrac{117}{32},\ \tfrac{69}{8},\ -\tfrac{225}{8},\\
&\tfrac{417807}{512},\ -\tfrac{47421}{256},\ -\tfrac{298131}{64},\ \tfrac{2277}{64},\ -\tfrac{7077}{64},\ -\tfrac{4713}{64},\ \tfrac{13221}{64}).
\end{split}
\]
Hence the distinct points
\begin{equation}\label{eq:collision38}
A_{38}=(r,0),\qquad B_{38}=(s+\rho_{38},i\rho_{38})
\end{equation}
have the same image under $\id-\nabla P_{38}$. By Theorem~\ref{thm:collision-hull}, $P_{38}$ is Hessian-nilpotent and violates Zhao's quartic Vanishing Conjecture. As a direct low-order check, $\Delta(P_{38}^2)\ne0$, with 8,497 monomials.

\begin{proof}[Proof of Theorem~\ref{thm:main}]
The Thompson--MacFarlane assertions follow from Proposition~\ref{prop:canonical20}, Corollary~\ref{cor:min40}, and the 40-variable lift in Section~\ref{sec:forty}. The 19-variable assertions follow from Proposition~\ref{prop:no19compression} and \eqref{eq:P38}--\eqref{eq:collision38}. In each case Theorem~\ref{thm:collision-hull} gives Hessian nilpotence, the exact collision mechanism, and both vanishing statements. The monomial counts are exact collections of the displayed lift formulas.
\end{proof}

Neither route-specific minimality statement is a claim of global minimality. De Bondt and van den Essen proved the corresponding homogeneous symmetric Jacobian statement through dimension five \citep{deBondtEssen2005b}. The point of \eqref{eq:growth19} is more specific: the collision-hull method itself does not lower that particular 19-variable source map further.

\section{Exact checks}\label{sec:checks}

The companion file \texttt{anc/check\_collision\_hulls.py}, supplied as ancillary material with the arXiv version and intended as Online Resource~1 with the journal submission, contains the displayed formulas and uses only exact arithmetic over $\Q$ and $\Q(i)$. The unchanged version accompanying this manuscript has SHA-256 hash
\begin{center}
\texttt{78700e02609f9d02576bb3aa3b69c96fc0f4a7b38584a7c9056afba3bbb8c5bb}.
\end{center}
It has been run with SymPy 1.14.0. From the submission root, run
\begin{verbatim}
python3 anc/check_collision_hulls.py
\end{verbatim}
The calculation confirms:
\begin{enumerate}[label=\arabic*.]
\item cubic homogeneity of Thompson's 24-dimensional nonlinear part $H$ and the four relations in \eqref{eq:four-relations};
\item the 24-component identity $H\circ\iota=\iota\circ h$;
\item that $\iota(p),\iota(q)$ are Thompson's displayed ambient collision points and map to $\iota(p)$;
\item cubic homogeneity, term counts, and the collision for the 20-variable map $h$;
\item $(\J h)^{17}\ne0$ and $(\J h)^{18}=0$;
\item the polarization-subalgebra dimensions $2,4,11,20,20$;
\item quartic homogeneity and the 350-monomial count for $P_{40}$;
\item that $\rho_{40}$ satisfies \eqref{eq:rho40-def};
\item the distinct collision in \eqref{eq:collision40};
\item the nonvanishing and term count of $\Delta(P_{40}^2)$;
\item Van Rijn's 12-variable collision and the rank-six cubic output;
\item cubic homogeneity and the displayed collision for $G_{19}$;
\item the polarization-subalgebra dimensions $2,4,11,19,19$;
\item quartic homogeneity and the 340-monomial count for $P_{38}$;
\item the displayed $\rho_{38}$ and the collision in \eqref{eq:collision38}; and
\item the nonvanishing and term count of $\Delta(P_{38}^2)$.
\end{enumerate}
The checker independently reconstructs the two collision-generated closures using exact rational row reduction and symmetric trilinear polarization. It does not re-prove that Van Rijn's 12-variable map is Keller; that exact determinant verification is part of the independently implemented certificate accompanying \citet{vanRijn12var2026}. The general arguments and source attributions remain subject to ordinary mathematical scrutiny.

\section*{Statements and Declarations}

\paragraph{Competing interests.} The author declares no competing interests.

\paragraph{Code availability.} The exact checker described in Section~\ref{sec:checks} is supplied as ancillary material with the arXiv version and as Online Resource~1 with the journal submission.

\paragraph{Use of generative AI and AI-assisted technologies.} During the research and preparation of this work, the author used OpenAI's ChatGPT and Codex to explore candidate formulations of the collision-compression argument, organize literature, check symbolic identities, assist with drafting and typesetting, and prepare and debug the exact verification scripts. Every retained theorem-level claim is proved in the manuscript, and the listed finite symbolic claims are reproducible from the companion code. The author reviewed the resulting arguments, source attributions, text, and code and takes full responsibility for the content of the publication.

\end{document}